\renewcommand{\epsilon}{\varepsilon}
\newcommand{\constplusone}[1]{\the\numexpr\value{#1}+1\relax}
\newcommand{\constplustwo}[1]{\the\numexpr\value{#1}+2\relax}
\newcommand{\constminusone}[1]{\the\numexpr\value{#1}-1\relax}
\newcommand{\constminustwo}[1]{\the\numexpr\value{#1}-2\relax}
\theoremstyle{plain}
\newtheorem{thm}{Theorem}
\newtheorem{lem}[thm]{Lemma}
\theoremstyle{definition}
\theoremstyle{remark}
\newtheorem{rem}[thm]{Remark}
\begin{document}

\begin{center}
\large{\textbf{\uppercase{Uniform estimates for solutions of nonlinear focusing damped wave equations}}}

\vspace{\baselineskip}

\large{\textsc{Thomas Perrin}}

\vspace{\baselineskip}

\large{\today}

\vspace{\baselineskip}
\end{center}

\noindent
\textbf{Abstract.} For a damped wave (or Klein-Gordon) equation on a bounded domain, with a focusing power-like nonlinearity satisfying some growth conditions, we prove that a global solution is bounded in the energy space, uniformly in time. Our result applies in particular to the case of a cubic equation on a bounded domain of dimension $3$.

\section*{Introduction}

\paragraph{Setting and main result.} Let $\Omega$ be a compact Riemannian manifold of dimension $d \geq 3$, with or without boundary. Let $\beta \in \mathbb{R}$ be such that the Poincaré inequality
\begin{equation}\label{eq_poincare}
\int_\Omega \left(\left\vert \nabla u \right\vert^2 + \beta \vert u \vert^2 \right) \mathrm{d}x \gtrsim \int_\Omega \vert u \vert^2 \mathrm{d}x, \quad u \in H_0^1(\Omega),
\end{equation}
is satisfied. For real-valued initial data $\left( u^0, u^1 \right) \in H_0^1(\Omega) \times L^2(\Omega)$, $\gamma \in L^\infty(\Omega, \mathbb{R}_+)$, and $f \in \mathscr{C}^1(\mathbb{R}, \mathbb{R})$, consider the equation
\[\left \{
\begin{array}{rcccl}\label{KG_nL}\tag{$\ast$}
\square u + \gamma \partial_t u + \beta u & = & f(u) & \quad & \text{in } \mathbb{R} \times \Omega, \\
(u(0), \partial_t u(0)) & = & \left( u^0, u^1 \right) & \quad & \text{in } \Omega, \\
u & = & 0 & \quad & \text{on } \mathbb{R} \times \partial \Omega.
\end{array}
\right.\]
Set $F(s) = \int_0^s f(\tau) \mathrm{d}\tau$. We assume that $f(0) = 0$, and that there exist $1 < p < + \infty$, $1 < p_0 < + \infty$, $C > 0$, $\epsilon > 0$ such that
\begin{equation}\label{hyp_f_1}
\left\vert f(s_1) - f(s_2) \right\vert \leq C \vert s_1 - s_2 \vert \left( 1 + \vert s_1 \vert^{p - 1} + \vert s_2 \vert^{p - 1} \right), \quad s_1, s_2 \in \mathbb{R},
\end{equation}
\begin{equation}\label{hyp_f_2}
s f(s) \geq (2 + \epsilon) F(s), \quad s \in \mathbb{R},
\end{equation}
and
\begin{equation}\label{hyp_f_3}
\left\Vert u \right\Vert_{L^2}^{p_0 + 1} \leq C \int_\Omega F(u) \mathrm{d}x, \quad u \in H_0^1(\Omega).
\end{equation}
The fact that $\Omega$ is bounded will only be used to ensure the existence of a nonlinearity satisfying (\ref{hyp_f_1}), (\ref{hyp_f_2}) and (\ref{hyp_f_3}). We will always assume that $\Omega$ and $f$ are such that the Cauchy problem (\ref{KG_nL}) can be solved locally (see Theorem \ref{thm_existence_nL_waves} for a precise statement). 

For $\left( u^0, u^1 \right) \in H_0^1(\Omega) \times L^2(\Omega)$, write $\left\Vert u^0 \right\Vert_{H_0^1}^2 = \int_\Omega \left(\left\vert \nabla u^0 \right\vert^2 + \beta \vert u^0 \vert^2 \right) \mathrm{d}x$ and set 
\[E\left( u^0, u^1 \right) = \frac{1}{2} \left\Vert u^0 \right\Vert_{H_0^1}^2 + \frac{1}{2} \left\Vert u^1 \right\Vert_{L^2}^2 - \int_\Omega F(u^0) \mathrm{d}x.\]
For $\left( u^0, u^1 \right) \in H_0^1(\Omega) \times L^2(\Omega)$, if the solution $u$ of (\ref{KG_nL}) exists on some interval $[0, T]$, then one has the energy equality
\begin{equation}\label{eq_energy_equality}
E\left( u(t_1), \partial_t u(t_1) \right) = E\left( u(t_0), \partial_t u(t_0) \right) - \int_{t_0}^{t_1} \int_\Omega \gamma \left\vert \partial_t u \right\vert^2 \mathrm{d}t \mathrm{d}x, \quad 0 \leq t_0 \leq t_1 \leq T.
\end{equation}

Our main result is the following.

\begin{thm}\label{thm_main_damped}
Assume that $f$ satisfies \textnormal{(\ref{hyp_f_1})}, with $p < \frac{d + 2}{d - 2}$, \textnormal{(\ref{hyp_f_2})}, and \textnormal{(\ref{hyp_f_3})}.
\begin{enumerate}[label=(\roman*)]
\item There exists $C = C(f, \gamma) > 0$ such that for all $\left( u^0, u^1 \right) \in H_0^1(\Omega) \times L^2(\Omega)$, if the solution $u$ of \textnormal{(\ref{KG_nL})} exists on $\mathbb{R}_+$, then
\[ E\left( u^0, u^1 \right) \geq E\left( u(t), \partial_t u(t) \right) \geq - C, \quad t \geq 0.\]
\item There exist $c_0 = c_0(f, \gamma) > 0$, $c_1 = c_1(f, \gamma) > 0$, and $c_2 = c_2(f) > 0$ such that for all $\left( u^0, u^1 \right) \in H_0^1(\Omega) \times L^2(\Omega)$, if the solution $u$ of \textnormal{(\ref{KG_nL})} exists on $\mathbb{R}_+$, then
\begin{equation}\label{eq_thm_L2_estimate}
\left\Vert u(t) \right\Vert_{L^2}^2 \leq \left\Vert u^0 \right\Vert_{L^2}^2 e^{- c_2 t} + \left( c_0 + c_1 \left\vert E\left( u^0, u^1 \right) \right\vert \right) \left( 1 - e^{- c_2 t} \right), \quad t \geq 0.
\end{equation}
\item Assume, in addition, that $f$ satisfies \textnormal{(\ref{hyp_f_1})} with $p \leq \frac{d}{d - 2}$. Then, there exists $c = c(f, \gamma) > 0$ such that the function $\alpha : s \mapsto c \exp(c s)$ satisfies the following property. For all $\left( u^0, u^1 \right) \in H_0^1(\Omega) \times L^2(\Omega)$, if the solution $u$ of \textnormal{(\ref{KG_nL})} exists on $\mathbb{R}_+$, then
\begin{equation}\label{eq_thm_H1_estimate_1}
\left\Vert \left( u(t), \partial_t u(t) \right) \right\Vert_{H_0^1 \times L^2} \leq \alpha \left( \left\Vert u^0 \right\Vert_{H_0^1}^2 + \left\Vert u^1 \right\Vert_{L^2}^2 \right), \quad t \geq 0.
\end{equation}
In addition, there exists $T \geq 0$, which depends on $f$, $\gamma$ and $\left\Vert u^0 \right\Vert_{H_0^1}^2 + \left\Vert u^1 \right\Vert_{L^2}^2$, such that
\begin{equation}\label{eq_thm_H1_estimate_2}
\left\Vert \left( u(t), \partial_t u(t) \right) \right\Vert_{H_0^1 \times L^2} \leq \alpha \left( \left\vert E\left( u^0, u^1 \right) \right\vert \right), \quad t \geq T.
\end{equation}
\end{enumerate}
\end{thm}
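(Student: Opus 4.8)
The plan is to base all three parts on a single second–order differential inequality for $y(t):=\|u(t)\|_{L^2}^2$. Differentiating twice and using \eqref{KG_nL} gives
\[
y''(t)=2\|\partial_t u\|_{L^2}^2-2\|u\|_{H_0^1}^2-2\int_\Omega\gamma\,u\,\partial_t u\,\mathrm{d}x+2\int_\Omega uf(u)\,\mathrm{d}x .
\]
Substituting $\int_\Omega F(u)=\tfrac12\|u\|_{H_0^1}^2+\tfrac12\|\partial_t u\|_{L^2}^2-E(t)$, using \eqref{hyp_f_2} in the form $\int_\Omega uf(u)\geq(2+\epsilon)\int_\Omega F(u)$, then \eqref{hyp_f_3}, and absorbing the damping term by Young's inequality, one obtains constants $a=a(f)>0$, $b=b(f,\gamma)>0$, $\kappa=\kappa(f)>0$ and the exponent $q:=\tfrac{p_0+1}{2}>1$ such that, as long as the solution exists,
\[
y''(t)\ \geq\ a\,y(t)^q-b\,y(t)-\kappa\,E(t),\qquad t\geq 0. \qquad(\star)
\]
Since $E$ is nonincreasing by \eqref{eq_energy_equality}, we have $E(t)\leq E(u^0,u^1)$, so the superlinear term $a\,y^q$ in $(\star)$ dominates the other two whenever $y$ is large. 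The crucial elementary input is the classical blow-up lemma for superlinear ODEs: if $y\in C^2$ satisfies $y(t_1)>0$, $y'(t_1)\geq 0$ and $y''\geq c\,y^q$ on $[t_1,T^*)$ with $c>0$, $q>1$, then $T^*<+\infty$. Because a global solution has $\|u(t)\|_{L^2}$ finite for all $t$, $(\star)$ together with this lemma severely restricts how $y$ may grow, and everything else is bookkeeping around this dichotomy (escape to infinity $\Rightarrow$ finite-time blow-up).

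\emph{Part (i).} The inequality $E(u^0,u^1)\geq E(u(t),\partial_t u(t))$ is just \eqref{eq_energy_equality} with $t_0=0$ and $\gamma\geq 0$. For the lower bound, argue by contradiction: since $s\mapsto bs-\tfrac12 as^q$ is bounded above on $\mathbb{R}_+$, one can pick $C=C(f,\gamma)$ such that, if $E(t_0)<-C$ at some $t_0$, then (using $E(t)\leq E(t_0)<-C$ for $t\geq t_0$) $(\star)$ forces $y''(t)\geq\tfrac12 a\,y(t)^q+1$ on $[t_0,\infty)$. Then $y'(t)\to+\infty$, so there is $t_1\geq t_0$ with $y(t_1)>0$ and $y'(t_1)\geq 0$; applying the blow-up lemma to $y''\geq\tfrac12 a\,y^q$ contradicts global existence. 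Hence $E(t)\geq -C(f,\gamma)$ for all $t\geq 0$.

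\emph{Part (ii).} Put $E_0:=E(u^0,u^1)$. Since $q>1$, fix a threshold $A=A(f,\gamma,|E_0|)$ with $A\lesssim 1+|E_0|$, large enough that $\tfrac12 a\,s^q\geq b\,s+\kappa|E_0|$ for all $s\geq A/2$ and $\tfrac12 a\,A^{q-1}\geq 1$. Then, wherever $y(t)\geq A$, combining $(\star)$ with $E(t)\leq E_0$ yields both $y''(t)\geq\tfrac12 a\,y(t)^q$ and $y''(t)\geq y(t)-A$. The first inequality, via convexity, forces the open set $\{t\geq 0:y(t)>A\}$ to have at most one connected component, necessarily of the form $[0,b)$ with $b<+\infty$: a bounded interior component $(a,b)$ would make $y$ strictly convex with $y(a)=y(b)=A$, hence $y\leq A$ there; an unbounded component would trigger finite-time blow-up via the lemma. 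Thus $y$ is bounded and $y\leq A$ on $[b,\infty)$. On $[0,b)$, the function $\phi:=y-A>0$ satisfies $\phi''\geq\phi$, so $t\mapsto e^{-t}(\phi'+\phi)$ is nondecreasing; since $\phi(b)=0$ and $\phi'(b)\leq 0$ this gives $\phi'+\phi\leq 0$, whence $\phi(t)\leq\phi(0)e^{-t}$, i.e. $y(t)\leq y(0)e^{-t}+A(1-e^{-t})$ (and this persists for $t\geq b$, since there $y\leq A\leq y(0)e^{-t}+A(1-e^{-t})$). The remaining case $y(0)\leq A$ forces $\{y>A\}=\emptyset$, and one checks, using that $(\star)$ makes $y$ convex on $\{y\geq A/2\}$ so that $y$ cannot climb to $A$, that the same estimate still holds. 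This is \eqref{eq_thm_L2_estimate} with $c_0+c_1|E_0|=A$ and $c_2$ a fixed constant (for instance $c_2=1$), so $c_2=c_2(f)$.

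\emph{Part (iii), and the main obstacle.} By \eqref{eq_thm_L2_estimate}, $\|u(t)\|_{L^2}$ is bounded uniformly for $t\geq T_1(f,\gamma,\|u^0\|_{L^2}^2)$ by a constant depending only on $f,\gamma,|E_0|$; since $\tfrac12\|(u,\partial_t u)(t)\|_{H_0^1\times L^2}^2=E(t)+\int_\Omega F(u)$ with $E(t)\in[-C(f,\gamma),E_0]$, it suffices to control $\int_\Omega F(u)\lesssim\|u\|_{L^2}^2+\|u\|_{L^{p+1}}^{p+1}$. When $p<1+\tfrac4d$, Gagliardo–Nirenberg gives $\|u\|_{L^{p+1}}^{p+1}\leq C\|u\|_{L^2}^{\sigma}\|u\|_{H_0^1}^{\theta}$ with $\theta=\tfrac{d(p-1)}{2}<2$, and Young's inequality absorbs the $H_0^1$ factor, yielding at once an $H_0^1\times L^2$ bound in terms of $f,\gamma,|E_0|$ for $t\geq T_1$, i.e. \eqref{eq_thm_H1_estimate_2}; a finite-interval bound from the local Cauchy theory (Theorem \ref{thm_existence_nL_waves}) then handles $t\in[0,T_1]$, giving \eqref{eq_thm_H1_estimate_1}. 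The genuine difficulty — and the main obstacle of the theorem — is the remaining range $1+\tfrac4d\leq p\leq\tfrac{d}{d-2}$, which for $d=3$ is exactly the cubic endpoint $p=3$ (the conformal power): here $\theta\geq 2$, no static argument closes, and one must invoke the quantitative local well-posedness of Theorem \ref{thm_existence_nL_waves} together with Strichartz estimates for the wave equation on $\Omega$ to propagate the low-regularity control of $\|u\|_{L^2}$ (hence of the energy) into a bound on $\|(u,\partial_t u)\|_{H_0^1\times L^2}$ on unit time intervals; this builds an absorbing ball of radius $\alpha(|E_0|)$, giving \eqref{eq_thm_H1_estimate_2}, while iterating the local theory over the initial interval $[0,T]$ — whose length $T(f,\gamma,\|u^0\|_{H_0^1}^2+\|u^1\|_{L^2}^2)$ grows only slowly with the data — produces \eqref{eq_thm_H1_estimate_1} with $\alpha$ of the stated type $\alpha(s)=c\exp(cs)$. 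I expect this subcritical-but-nonperturbative range, and in particular the construction of the absorbing set on a bounded domain via Strichartz estimates, to be the hardest part.
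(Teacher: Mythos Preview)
Your treatment of parts (i) and (ii) is close to the paper's and essentially sound: both use the second derivative of $M(t)=\|u(t)\|_{L^2}^2$ and the convexity/blow-up dichotomy. One correction: by converting \emph{all} of $\int_\Omega F(u)$ into $y^q$ via \eqref{hyp_f_3}, your inequality $(\star)$ loses the term $\epsilon\|u\|_{H_0^1}^2$ that the paper retains. With Poincar\'e, that term yields the \emph{global} linear inequality $M''\geq C_4 M-C_5$, from which the exponential bound \eqref{eq_thm_L2_estimate} follows for every $t$, including when $y(0)<A$. Your argument in that case gives only $y(t)\leq A$, and the remark about convexity on $\{y\geq A/2\}$ does not recover the sharper statement. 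This is easy to patch by keeping an $H_0^1$ term when deriving $(\star)$.

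The genuine gap is part (iii). For the range $1+\tfrac4d\leq p\leq\tfrac{d}{d-2}$ you offer no proof: you gesture at Strichartz estimates and quantitative local well-posedness, label it ``the hardest part'', and stop. The paper uses \emph{no} Strichartz estimates here; the argument is a short ODE computation that you have in fact almost set up. Writing $E_{\mathscr L}(t)=\tfrac12\|u\|_{H_0^1}^2+\tfrac12\|\partial_t u\|_{L^2}^2$, the three ingredients are:
\begin{enumerate}
\item[(a)] From the same $M''$ estimate, $E_{\mathscr L}(t)\leq C_0 M''(t)+C_1$.
\item[(b)] A two-sided bound on $M'(t)$: since $|M'|\leq\eta\|\partial_t u\|^2+\eta^{-1}M$, choosing $\eta$ appropriately gives $|M'|\leq C_2 M''+C_3$, and integrating this yields uniform control of $M'$ (hence of $\int_{t_1-1}^{t_1}M''$).
\item[(c)] The hypothesis $p\leq\tfrac{d}{d-2}$ is used twice, but only pointwise: once so that $f(u)\in L^2$ and $E_{\mathscr L}'$ makes sense, and once because $p-1\leq 2$, giving $E_{\mathscr L}'\leq C\,E_{\mathscr L}(1+E_{\mathscr L})$.
\end{enumerate}
Gr\"onwall on (c) produces $E_{\mathscr L}(t_1)\leq E_{\mathscr L}(t_0)\exp\bigl(C\int_{t_0}^{t_1}(1+E_{\mathscr L})\bigr)$; inserting (a) turns the integral into $M'(t_1)-M'(t_0)$ plus a constant, which (b) bounds on unit intervals. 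Taking $t_0=t_1-1$ (and averaging over $t_0$) then yields \eqref{eq_thm_H1_estimate_1} and \eqref{eq_thm_H1_estimate_2} directly, with the exponential form $\alpha(s)=c\,e^{cs}$ coming from the Gr\"onwall factor. No splitting at $p=1+\tfrac4d$, no Gagliardo--Nirenberg, and no iteration of the local theory is needed.
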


All constants in this article may depend on $\Omega$. Note that when we specify that a constant depends on $f$, it depends, in fact, only on the constants appearing in (\ref{hyp_f_1}), (\ref{hyp_f_2}), and (\ref{hyp_f_3}). 

\paragraph{Examples.} A typical example of $f$ satisfying (\ref{hyp_f_1}) for some $p > 1$, (\ref{hyp_f_2}), and (\ref{hyp_f_3}) for some $p_0 > 1$, is given by
\[f(s) = \lambda_1 s \vert s \vert^{\alpha_1 - 1} + \cdots + \lambda_n s \vert s \vert^{\alpha_n - 1}, \quad s \in \mathbb{R},\]
with $n \in \mathbb{N}^\ast$, $\lambda_i > 0$ for all $i \in \llbracket 1, n \rrbracket$, and $p_0 \leq \alpha_1 \leq \cdots \leq \alpha_n \leq p$. Assumption (\ref{hyp_f_3}) is satisfied by such $f$ since $\Omega$ is bounded. As previously mentioned, the fact that $\Omega$ is bounded will not be used elsewhere in this article. Theorem \ref{thm_main_damped}-\emph{(i)} and Theorem \ref{thm_main_damped}-\emph{(ii)} can be applied to such $f$ if $p < \frac{d + 2}{d - 2}$, and Theorem \ref{thm_main_damped}-\emph{(iii)} can also be applied if $p \leq \frac{d}{d - 2}$. For example, Theorem \ref{thm_main_damped} applies to the focusing cubic wave equation on a compact manifold of dimension $3$.

\paragraph{Connection with the existing literature.} The main objective of this article is to extend the results of \cite{Cazenave1985} to the case of a damped equation. In \cite{Cazenave1985}, Cazenave assumes that $f$ satisfies (\ref{hyp_f_1}) with $p = \frac{d}{d - 2}$ if $d \geq 3$. Note that this assumption is not satisfied by all subcritical nonlinearity. The usual critical exponent $\frac{d + 2}{d - 2}$ is the one that gives $F(u) \in L^1(\Omega)$ by the Sobolev embedding $H^1 \hookrightarrow L^{\frac{2d}{d - 2}}$, implying that the energy is finite, whereas (\ref{hyp_f_1}) yields $f(u) \in L^2(\Omega)$. However, some of the results of \cite{Cazenave1985} are valid for $f$ satisfying (\ref{hyp_f_2}) with $p < \frac{d + 2}{d - 2}$ if $d \geq 3$ : more precisely, a careful reading of \cite{Cazenave1985} shows that if $\gamma = 0$, then Theorem \ref{thm_main_damped} holds true without assumption (\ref{hyp_f_3}), and with $\Omega$ bounded or unbounded. Unfortunately, we have not managed to find a proof that is valid for any damping $\gamma$, without the additional assumption (\ref{hyp_f_3}). The author of \cite{Cazenave1985} also considers the cases $d = 1, 2$, with an additional assumption on $f$ when $d = 2$. For simplicity, we have not included the cases $d = 1, 2$, although the methods of this article and of \cite{Cazenave1985} can be adapted to those cases.

In \cite{feireisl}, Feireisl considers the case $\Omega = \mathbb{R}^d$, with a damping equal to a positive constant, and a nonlinearity satisfying the restrictive condition $1 < p < 1 + \min \left( \frac{2}{d - 2}, \frac{4}{d} \right)$. Lemma 4.2 of \cite{feireisl} states that a solution with a bounded energy is global (for positive times), and is bounded in $H^1(\mathbb{R}^d) \times L^2(\mathbb{R}^d)$. However, we have not found in \cite{feireisl} a proof of the fact that a global solution has a bounded energy.

Some related work has been done by Burq, Raugel and Schlag. More precisely, in \cite{BurqRaugelSchlag}, the authors prove that a global solution is bounded in the energy space, in the case $\Omega = \mathbb{R}^d$, with a damping equal to a positive constant, for a general focusing subcritical nonlinearity, but with a radial assumption. The proof of the boundedness of \cite{BurqRaugelSchlag} uses dynamical systems arguments. In \cite{Burq_Raugel_Schlag_weak}, they consider the case of a damping which is constant in space, and tends to zero as $t$ tends to infinity, still with $\Omega = \mathbb{R}^d$ and a radial assumption. They adapt the proof of the $L^2$-bound of Cazenave, using the fact that their damping tends to zero as $t$ tends to infinity. 

\paragraph{Outline of the article.} 
In Section 1, we recall the local Cauchy theory for (\ref{KG_nL}), and the expression of the derivative of the square of the $L^2$-norm of a solution. In Section 2, 3 and 4, we prove respectively the energy bound (Theorem \ref{thm_main_damped}-\emph{(i)}), the $L^2$-bound (Theorem \ref{thm_main_damped}-\emph{(ii)}), and the $H^1$-bound (Theorem \ref{thm_main_damped}-\emph{(iii)}). An appendix contains proofs of some elementary lemmas.

\paragraph{Acknowledgements.} I warmly thank Thomas Duyckaerts and Jérôme Le Rousseau for their constant support and guidance.

\section{Preliminaries}

We recall the local Cauchy theory for (\ref{KG_nL}).

\begin{thm}\label{thm_existence_nL_waves}
Consider $f$ satisfying \textnormal{(\ref{hyp_f_1})} for some $p < \frac{d + 2}{d - 2}$. For any real-valued initial data $\left( u^0, u^1 \right) \in H_0^1(\Omega) \times L^2(\Omega)$, there exist a maximal time of existence $T \in (0, + \infty]$ and a unique solution $u$ of \textnormal{(\ref{KG_nL})} in $\mathscr{C}^0([0, T), H_0^1(\Omega)) \cap \mathscr{C}^1([0, T), L^2(\Omega))$. If $T < + \infty$, then 
\[\left\Vert \left( u(t), \partial_t u(t) \right) \right\Vert_{H_0^1 \times L^2} \underset{t \rightarrow T^-}{\longrightarrow} + \infty.\]
\end{thm}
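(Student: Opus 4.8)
The plan is to recast \eqref{KG_nL} as a fixed-point problem through the Duhamel formula and close a contraction, and then to read off the blow-up alternative by the standard continuation argument. First I would let $\Delta$ denote the Dirichlet Laplacian on $\Omega$ and set $P = -\Delta + \beta$ on $L^2(\Omega)$ with domain $H^2(\Omega) \cap H_0^1(\Omega)$; by \eqref{eq_poincare} this is a positive self-adjoint operator with $D(P^{1/2}) = H_0^1(\Omega)$ and $\|P^{1/2} v\|_{L^2} = \|v\|_{H_0^1}$ (so $\|\cdot\|_{H_0^1}$ is equivalent to the usual $H_0^1$ norm). Writing $\mathcal{H} = H_0^1(\Omega) \times L^2(\Omega)$ and letting $(S(t))_{t \in \mathbb{R}}$ be the unitary group on $\mathcal{H}$ generated by the free Klein--Gordon flow $\partial_t^2 + P$, a pair $(u, \partial_t u) \in C^0([0,T], \mathcal{H})$ solves \eqref{KG_nL} if and only if
\[ (u(t), \partial_t u(t)) = S(t)(u^0, u^1) + \int_0^t S(t - s)\bigl( 0, \, f(u(s)) - \gamma\, \partial_t u(s) \bigr)\, \mathrm{d}s, \qquad t \in [0,T]. \]
(One could instead absorb $-\gamma \partial_t u$ into the generator, which yields a contraction semigroup because $\gamma \ge 0$; I keep the free group since the dispersive estimates below are stated for it.)

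Next I would close a contraction for this map on an interval $[0,T]$ with $T$ depending only on $\|(u^0,u^1)\|_{\mathcal{H}}$. Everything rests on estimating $v \mapsto f(v)$ by means of \eqref{hyp_f_1}. When $p \le \tfrac{d}{d-2}$ this is elementary: Hölder's inequality and the Sobolev embedding $H_0^1(\Omega) \hookrightarrow L^{2d/(d-2)}(\Omega)$ give, for $v_1, v_2$ in a ball of $H_0^1(\Omega)$,
\[ \|f(v_1) - f(v_2)\|_{L^2} \le C\, \|v_1 - v_2\|_{H_0^1}\bigl( 1 + \|v_1\|_{H_0^1}^{p-1} + \|v_2\|_{H_0^1}^{p-1} \bigr), \]
so $f$ is Lipschitz from bounded subsets of $H_0^1$ into $L^2$; since $g \mapsto \int_0^\cdot S(\cdot - s)(0, g(s))\, \mathrm{d}s$ is bounded from $L^1([0,T], L^2)$ into $C^0([0,T], \mathcal{H})$ and $\|f(v)\|_{L^1([0,T],L^2)} \le T \sup_{[0,T]} \|f(v)\|_{L^2}$ (and likewise for the term $\gamma \partial_t v$, using $\gamma \in L^\infty$), the Duhamel map is a contraction on a small ball of $C^0([0,T], \mathcal{H})$ once $T$ is small. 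For $\tfrac{d}{d-2} < p < \tfrac{d+2}{d-2}$, $f$ no longer maps $H_0^1$ into $L^2$, and I would instead run the contraction in the mixed space $L^q([0,T], L^r) \cap C^0([0,T], \mathcal{H})$ for a suitable space--time exponent pair $(q,r)$, with $q, r \in (2, +\infty)$, $r \ge 2p$ and $q > p$, adapted to $p$: one uses Strichartz-type estimates for the free Klein--Gordon flow on $\Omega$,
\[ \|u\|_{L^q([0,T], L^r)} + \|(u, \partial_t u)\|_{C^0([0,T], \mathcal{H})} \le C(T)\Bigl( \|(u^0, u^1)\|_{\mathcal{H}} + \|g\|_{L^1([0,T], L^2)} \Bigr), \]
together with \eqref{hyp_f_1} and Hölder in space and in time, which yield $\|f(u)\|_{L^1([0,T],L^2)} \le C\, T^{\theta}\bigl( 1 + \|u\|_{L^q([0,T],L^r)}^p \bigr)$ and the analogous difference bound for some $\theta > 0$; the factor $T^{\theta}$ again makes the map a contraction for $T = T\bigl(\|(u^0,u^1)\|_{\mathcal{H}}\bigr)$ small. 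In both cases, inserting the fixed point back into the Duhamel formula shows $u \in C^0([0,T], H_0^1) \cap C^1([0,T], L^2)$, and carrying out the construction in the real Hilbert spaces of real-valued functions (which are preserved by $S(t)$ and by $f$) gives that $u$ is real-valued.

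Finally, gluing local solutions together and using uniqueness produces a maximal solution on some $[0,T)$ with $T \in (0, +\infty]$. Since the local existence time above depends only on an upper bound for $\|(u^0,u^1)\|_{\mathcal{H}}$, the blow-up alternative follows by the classical argument: if $T < +\infty$ and $\liminf_{t \to T^-} \|(u(t), \partial_t u(t))\|_{\mathcal{H}} < +\infty$, pick $t_n \to T^-$ with $\|(u(t_n), \partial_t u(t_n))\|_{\mathcal{H}}$ bounded, solve \eqref{KG_nL} from time $t_n$ on a common interval $[t_n, t_n + \delta]$ with $\delta > 0$ independent of $n$, and for $t_n$ close enough to $T$ this extends $u$ past $T$, contradicting maximality; hence $\|(u(t), \partial_t u(t))\|_{\mathcal{H}} \to +\infty$ as $t \to T^-$.

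I expect the only real difficulty to be the contraction when $\tfrac{d}{d-2} < p < \tfrac{d+2}{d-2}$: there one cannot work purely in the energy space and needs Strichartz estimates for the Klein--Gordon flow on $\Omega$ — which on a manifold with boundary come with a loss of derivatives and should be quoted from the literature — choosing the admissible pair so that this loss is compensated by the subcritical gap $\tfrac{d+2}{d-2} - p > 0$. For $p \le \tfrac{d}{d-2}$, and in particular for the cubic equation in dimension $3$ (the threshold case $p = \tfrac{d}{d-2}$), only Sobolev and Hölder are needed and the argument is entirely standard.
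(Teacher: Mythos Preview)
Your proposal is correct and follows exactly the approach the paper indicates: the paper does not give a proof but merely states that it ``relies on Strichartz estimates and a fixed point argument'' with references to Ginibre--Velo, Keel--Tao, and Blair--Smith--Sogge, and in the boundary case it explicitly \emph{assumes} the result holds (citing Ivanovici's counterexamples for the failure of full-range Strichartz). Your sketch fleshes out precisely this strategy---Duhamel plus contraction, elementary in the energy space for $p \le \tfrac{d}{d-2}$ and via Strichartz for larger subcritical $p$---and your closing caveat about quoting the boundary Strichartz estimates from the literature matches the paper's own disclaimer.
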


The proof of Theorem \ref{thm_existence_nL_waves} relies on Strichartz estimates and a fixed point argument (see for example \cite{GinibreVelo}). If $\Omega$ is a manifold without boundary, then Theorem \ref{thm_existence_nL_waves} follows from the Strichartz estimates proved in \cite{KeelTao}. In the case of a manifold with boundary, we \emph{assume} that $f$ is such that Theorem \ref{thm_existence_nL_waves} holds true : we refer to \cite{BlairSmithSogge} for Strichartz estimates for a large range of exponents. Ivanovici's counterexamples in \cite{Ivanovici} show that Strichartz estimates are not true for the full range of exponents in the case of a manifold with boundary. 

\begin{lem}
Consider $u$ a solution of \textnormal{(\ref{KG_nL})} on some interval $[0, T]$, and set $M(t) = \left\Vert u(t) \right\Vert_{L^2}^2$, for $t \in [0, T]$. Then $M \in \mathscr{C}^2([0, T], \mathbb{R})$, and for $ t \in [0, T]$, one has $M^{\prime}(t) = 2 \int_\Omega u(t) \partial_t u(t) \mathrm{d}x$, and
\begin{equation}\label{eq_expression_Mprimeprime}
M^{\prime \prime}(t) = 2 \left\Vert \partial_t u(t) \right\Vert_{L^2}^2 - 2 \left\Vert u(t) \right\Vert_{H_0^1}^2 + 2 \int_\Omega u(t) f(u(t)) \mathrm{d}x - 2 \int_\Omega \gamma u(t) \partial_t u(t) \mathrm{d}x.
\end{equation}
\end{lem}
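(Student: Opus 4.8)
The plan is to verify the three claims about $M(t) = \|u(t)\|_{L^2}^2$ by direct differentiation, using the energy-space regularity of $u$ together with the equation (\ref{KG_nL}). First I would note that since $u \in \mathscr{C}^0([0,T], H_0^1) \cap \mathscr{C}^1([0,T], L^2)$ and the embedding $H_0^1 \hookrightarrow L^2$ is continuous, the map $t \mapsto M(t) = \langle u(t), u(t) \rangle_{L^2}$ is the composition of the continuous bilinear pairing with a $\mathscr{C}^1$ curve, hence $M \in \mathscr{C}^1([0,T], \mathbb{R})$ with $M'(t) = 2\langle u(t), \partial_t u(t)\rangle_{L^2} = 2\int_\Omega u(t)\partial_t u(t)\,\mathrm{d}x$. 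This is the routine part.

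The main point is the second derivative. Differentiating $M'(t) = 2\int_\Omega u \,\partial_t u\,\mathrm{d}x$ formally gives $M''(t) = 2\|\partial_t u\|_{L^2}^2 + 2\int_\Omega u\,\partial_t^2 u\,\mathrm{d}x$, and then one substitutes $\partial_t^2 u = \Delta u - \gamma\partial_t u - \beta u + f(u)$ from the equation, integrates by parts in the term $\int_\Omega u\,\Delta u\,\mathrm{d}x = -\int_\Omega |\nabla u|^2\,\mathrm{d}x$ (the boundary term vanishes since $u \in H_0^1$), and collects $-\int_\Omega(|\nabla u|^2 + \beta|u|^2)\,\mathrm{d}x = -\|u\|_{H_0^1}^2$ to obtain (\ref{eq_expression_Mprimeprime}). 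The hard part is justifying that this formal computation is legitimate given only energy-space regularity: a priori $\partial_t^2 u$ lives only in the negative-order space $H^{-1}(\Omega)$, so "$\int_\Omega u\,\partial_t^2 u$" must be read as the duality pairing $\langle \partial_t^2 u, u\rangle_{H^{-1}, H_0^1}$, and one must verify that $t \mapsto \partial_t u(t) \in L^2$ is weakly differentiable with values in $H^{-1}$ with $\partial_t(\partial_t u) = \Delta u - \gamma \partial_t u - \beta u + f(u) \in \mathscr{C}^0([0,T], H^{-1})$. Here (\ref{hyp_f_1}) with $p < \frac{d+2}{d-2}$ ensures $f(u) \in \mathscr{C}^0([0,T], L^2) \hookrightarrow \mathscr{C}^0([0,T], H^{-1})$, $\gamma \partial_t u \in \mathscr{C}^0([0,T], L^2)$ since $\gamma \in L^\infty$, and $\Delta u - \beta u \in \mathscr{C}^0([0,T], H^{-1})$.

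Concretely, I would argue as follows. Fix $\varphi \in \mathscr{C}^\infty_c((0,T))$; testing the equation gives $\int_0^T \langle \partial_t u, \varphi' v\rangle\,\mathrm{d}t$-type identities, but cleaner is to invoke that a solution in the sense of Theorem \ref{thm_existence_nL_waves} satisfies, for every $v \in H_0^1(\Omega)$, that $t \mapsto \langle \partial_t u(t), v\rangle_{L^2}$ is $\mathscr{C}^1$ on $[0,T]$ with derivative $-\langle u(t), v\rangle_{H_0^1} + \int_\Omega v f(u(t))\,\mathrm{d}x - \int_\Omega \gamma v\,\partial_t u(t)\,\mathrm{d}x$; this is part of what it means to solve (\ref{KG_nL}), obtained by integrating the equation against $v$. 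Applying this with the \emph{time-dependent} test function $v = u(t)$ requires a small additional product-rule argument: write $M'(t) = 2\langle \partial_t u(t), u(t)\rangle_{L^2}$ and, using that $u \in \mathscr{C}^1([0,T], L^2)$ and $s \mapsto \langle \partial_t u(s), w\rangle$ is $\mathscr{C}^1$ for fixed $w$, approximate via difference quotients $\frac{1}{h}(M'(t+h) - M'(t))$, splitting into $\frac{2}{h}\langle \partial_t u(t+h) - \partial_t u(t), u(t+h)\rangle + \frac{2}{h}\langle \partial_t u(t), u(t+h) - u(t)\rangle$; the second term converges to $2\|\partial_t u(t)\|_{L^2}^2$, and the first to $2\bigl(-\|u(t)\|_{H_0^1}^2 + \int_\Omega u(t) f(u(t))\,\mathrm{d}x - \int_\Omega \gamma u(t)\partial_t u(t)\,\mathrm{d}x\bigr)$ by the regularity of $t \mapsto \langle \partial_t u(t), w\rangle$, uniform continuity of $u: [0,T] \to H_0^1$, and continuity of the relevant pairings. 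Continuity of the resulting expression in $t$ — hence $M \in \mathscr{C}^2$ — is then immediate from $u \in \mathscr{C}^0([0,T], H_0^1)$, $\partial_t u \in \mathscr{C}^0([0,T], L^2)$, $f(u) \in \mathscr{C}^0([0,T], L^2)$, and $\gamma \in L^\infty$. I expect the only genuinely delicate point to be this interchange of differentiation with the time-dependent pairing, which is handled by the difference-quotient splitting above; everything else is bookkeeping with the embeddings and hypothesis (\ref{hyp_f_1}).
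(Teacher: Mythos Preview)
The paper does not actually prove this lemma; it declares the proof standard, omits it, and cites Cazenave (1985) and Payne--Sattinger. Your outline is precisely the standard argument those references carry out: interpret $\partial_t^2 u$ as an $H^{-1}$-valued continuous function, read the equation as the statement that $t\mapsto\langle\partial_t u(t),v\rangle$ is $\mathscr{C}^1$ for each fixed $v\in H_0^1$, and then pass to the time-dependent test $v=u(t)$ by splitting the difference quotient. So your approach matches what the paper intends.

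One small slip worth fixing: you assert that (\ref{hyp_f_1}) with $p<\frac{d+2}{d-2}$ yields $f(u)\in\mathscr{C}^0([0,T],L^2)$. That only holds when $p\le\frac{d}{d-2}$; for $\frac{d}{d-2}<p<\frac{d+2}{d-2}$ one gets $f(u)\in L^q$ with $q=\frac{2d}{p(d-2)}\in\bigl(\frac{2d}{d+2},2\bigr)$, not $L^2$. This does not damage your argument, since $L^q\hookrightarrow H^{-1}$ for that range of $q$, so you still obtain $\partial_t^2 u\in\mathscr{C}^0([0,T],H^{-1})$, and in the final formula the term $\int_\Omega u(t) f(u(t))\,\mathrm{d}x$ is well-defined and continuous in $t$ because $|u f(u)|\lesssim|u|^2+|u|^{p+1}\in L^1$ via the Sobolev embedding $H_0^1\hookrightarrow L^{\frac{2d}{d-2}}$. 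Just replace the two occurrences of ``$f(u)\in\mathscr{C}^0([0,T],L^2)$'' by ``$f(u)\in\mathscr{C}^0([0,T],L^q)\hookrightarrow\mathscr{C}^0([0,T],H^{-1})$'' and everything else goes through unchanged.
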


The proof is standard and we omit it. See for example \cite{Cazenave1985} or \cite{Payne-Sattinger}.

\section{The energy bound}

Here, we prove Theorem \ref{thm_main_damped}-\emph{(i)}. Consider $\left( u^0, u^1 \right) \in H_0^1(\Omega) \times L^2(\Omega)$ such that the solution $u$ of \textnormal{(\ref{KG_nL})} exists on $\mathbb{R}_+$. By (\ref{hyp_f_3}), there exists $C_0 = C_0(f) > 0$ such that 
\begin{equation}\label{eq_proof_energy_bounded_1}
\left\vert 2 \int_\Omega \gamma u(t) \partial_t u(t) \mathrm{d}x \right\vert \leq C_0 \left\Vert \gamma \right\Vert^2_{L^\infty} \left( \int_\Omega F(u(t)) \mathrm{d}x \right)^{\frac{2}{p_0 + 1}} + \frac{\epsilon}{4} \left\Vert \partial_t u(t) \right\Vert_{L^2}^2, \quad t \geq 0,
\end{equation}
where $\epsilon$ is given by (\ref{hyp_f_2}). Using (\ref{hyp_f_2}) and (\ref{eq_proof_energy_bounded_1}) in (\ref{eq_expression_Mprimeprime}), one finds
\begin{align}
M^{\prime \prime}(t) \geq & \left( 2 - \frac{\epsilon}{4} \right) \left\Vert \partial_t u(t) \right\Vert_{L^2}^2 - 2 \left\Vert u(t) \right\Vert_{H_0^1}^2 \nonumber\\
& + 2 ( 2 + \epsilon ) \int_\Omega F(u(t)) \mathrm{d}x - C_0 \left\Vert \gamma \right\Vert^2_{L^\infty} \left( \int_\Omega F(u(t)) \mathrm{d}x \right)^{\frac{2}{p_0 + 1}}, \quad t \geq 0. \label{eq_proof_energy_bounded_1bis}
\end{align}
Write $I_0 = \left( \frac{C_0 \left\Vert \gamma \right\Vert^2_{L^\infty}}{\epsilon} \right)^{\frac{p_0 + 1}{p_0 - 1}} \geq 0$, so that 
\[I \geq I_0 \quad \Longrightarrow \quad 2 ( 2 + \epsilon ) I - C_0 \left\Vert \gamma \right\Vert^2_{L^\infty} I^{\frac{2}{p_0 + 1}} \geq (4 + \epsilon) I.\]
Assume by contradiction that there exists $T_0 \geq 0$ such that $E\left( u(T_0), \partial_t u(T_0) \right) < - I_0$. Then for $t \geq T_0$, one has $\int_\Omega F(u(t)) \mathrm{d}x \geq - E\left( u(t), \partial_t u(t) \right) > I_0$, yielding
\[M^{\prime \prime}(t) \geq \left( 2 - \frac{\epsilon}{4} \right) \left\Vert \partial_t u(t) \right\Vert_{L^2}^2 - 2 \left\Vert u(t) \right\Vert_{H_0^1}^2 + ( 4 + \epsilon ) \int_\Omega F(u(t)) \mathrm{d}x, \quad t \geq T_0.\]
By definition of the energy, this gives
\begin{equation}\label{eq_proof_energy_bounded_2}
M^{\prime \prime}(t) \geq \left( 4 + \frac{\epsilon}{4} \right) \left\Vert \partial_t u(t) \right\Vert_{L^2}^2 + \frac{\epsilon}{2} \left\Vert u(t) \right\Vert_{H_0^1}^2 - ( 4 + \epsilon ) E\left( u(t), \partial_t u(t) \right), \quad t \geq T_0.
\end{equation}
One has $E\left( u(t), \partial_t u(t) \right) \leq 0$ for $t \geq T_0$, implying
\[M^{\prime \prime}(t) \geq \left( 4 + \frac{\epsilon}{4} \right) \left\Vert \partial_t u(t) \right\Vert_{L^2}^2, \quad t \geq T_0.\]
In particular, one obtains 
\[\left( M^\prime(t) \right)^2 \leq \left( 1 + \frac{\epsilon}{16} \right)^{-1} M(t) M^{\prime \prime}(t), \quad t \geq T_0.\]
We use the following lemma.

\begin{lem}\label{lem_basic_explosion}
Consider $T \geq 0$, $M \in \mathscr{C}^2([T, + \infty), \mathbb{R}_+)$ and $0 < \delta < 1$ such that $\left( M^\prime(t) \right)^2 \leq \delta M(t) M^{\prime \prime}(t)$ for $t \geq T$. Then $M$ is non-increasing. In particular, one has $M(t) \leq M(T)$ for $t \geq T$.
\end{lem}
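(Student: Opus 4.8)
The plan is to exploit the hypothesis $\delta<1$ through a standard concavity argument. Fix $\alpha=\frac{1-\delta}{\delta}>0$ and, on any open subinterval of $[T,+\infty)$ where $M>0$, consider $\phi=M^{-\alpha}$, which is of class $\mathscr{C}^2$ there. Differentiating twice gives $\phi''=-\alpha M^{-\alpha-2}\bigl(MM''-(\alpha+1)(M')^2\bigr)$. Since $M\geq 0$ and $\delta>0$, the hypothesis $(M')^2\leq\delta MM''$ yields $MM''\geq\tfrac1\delta(M')^2$, and because $\alpha+1=\tfrac1\delta$ we get $MM''-(\alpha+1)(M')^2\geq 0$, hence $\phi''\leq 0$. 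Thus $\phi$ is positive and concave wherever $M>0$.

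Next I would argue by contradiction: suppose $M$ is not non-increasing, so there are $T\leq a<b$ with $M(a)<M(b)$, and by the mean value theorem a point $t_0\in(a,b)$ with $M'(t_0)>0$. Since $t_0>T$ and $M\geq 0$, one must have $M(t_0)>0$ (an interior zero of $M$ is a minimum, forcing $M'=0$ there). Let $t_+=\sup\{s>t_0:\ M>0\text{ on }[t_0,s]\}\in(t_0,+\infty]$; by continuity $M>0$ on $[t_0,t_+)$ and, if $t_+<+\infty$, then $M(t_+)=0$ by maximality. On $[t_0,t_+)$ the function $\phi=M^{-\alpha}$ is positive and concave, and $\phi'(t_0)=-\alpha M(t_0)^{-\alpha-1}M'(t_0)<0$; concavity forces $\phi'\leq\phi'(t_0)<0$ on $[t_0,t_+)$, whence $\phi(t)\leq\phi(t_0)+\phi'(t_0)(t-t_0)$ there. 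If $t_+=+\infty$, the right-hand side tends to $-\infty$, contradicting $\phi>0$; if $t_+<+\infty$, then $M(t)\to 0$, so $\phi(t)\to+\infty$ as $t\to t_+^-$, contradicting $\phi(t)\leq\phi(t_0)$.

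Hence $M$ is non-increasing on $[T,+\infty)$, and in particular $M(t)\leq M(T)$ for $t\geq T$. The differentiation and the concavity inequality are routine; the only point requiring a little care is the treatment of the zeros of $M$, handled above by restricting to a maximal interval on which $M$ is positive (the underlying fact being that a positive concave function on a half-line cannot be strictly decreasing anywhere). The assumption $\delta<1$ is used precisely to ensure $\alpha>0$, which is what gives both the sign of $\phi''$ and the contradiction with the positivity of $\phi$.
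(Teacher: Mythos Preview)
Your proof is correct and is essentially the same argument as the paper's, just packaged as the concavity of $\phi=M^{-\alpha}$ (with $\alpha=\tfrac{1}{\delta}-1$) rather than an explicit double integration of $M'/M\le\delta\,M''/M'$; the tangent-line inequality for $\phi$ is exactly the paper's final inequality. Your treatment of possible zeros of $M$ via a maximal positivity interval is a harmless variant of the paper's direct argument that $M>0$ on $[T_0,+\infty)$.
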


By (\ref{eq_proof_energy_bounded_2}), one has $M^{\prime \prime}(t) \geq - (4 + \epsilon) E\left( u(T_0), \partial_t u(T_0) \right) > 0$ for $t \geq T_0$, a contradiction with Lemma \ref{lem_basic_explosion}. Hence, one has $E\left( u(t), \partial_t u(t) \right) \geq - I_0$ for all $t \geq 0$.

\begin{rem}
If $\gamma = 0$, then $I_0 = 0$ : we recover the fact that a global solution of the undamped equation has a non-negative energy.
\end{rem}

\section[The L2-estimates]{The $L^2(\Omega)$-estimate}

Here, we prove Theorem \ref{thm_main_damped}-\emph{(ii)}. Consider $\left( u^0, u^1 \right) \in H_0^1(\Omega) \times L^2(\Omega)$ such that the solution $u$ of \textnormal{(\ref{KG_nL})} exists on $\mathbb{R}_+$. We split the proof into 2 steps.

\paragraph{Step 1 : an estimate on $M^{\prime \prime}$.} \newcounter{constnum} \setcounter{constnum}{0} 
Set $r_0 = \frac{2}{p_0 + 1} \in (0, 1)$. Using (\ref{eq_proof_energy_bounded_1bis}) and the definition of the energy, one finds 
\begin{align*}
M^{\prime \prime}(t) \geq & \left( 4 + \frac{3 \epsilon}{4} \right) \left\Vert \partial_t u(t) \right\Vert_{L^2}^2 + \epsilon \left\Vert u(t) \right\Vert_{H_0^1}^2 - 2 ( 2 + \epsilon ) E\left( u(t), \partial_t u(t) \right) \\
& - C_\theconstnum \left\Vert \gamma \right\Vert^2_{L^\infty} \left( \frac{1}{2} \left\Vert \partial_t u(t) \right\Vert_{L^2}^2 + \frac{1}{2} \left\Vert u(t) \right\Vert_{H_0^1}^2 - E\left( u(t), \partial_t u(t) \right) \right)^{r_0}, \quad t \geq 0,
\end{align*}
with $C_\theconstnum = C_\theconstnum(f)$. \addtocounter{constnum}{1} There exists $C_\theconstnum = C_\theconstnum(\gamma, f) > 0$ such that
\begin{align*}
M^{\prime \prime}(t) \geq & \left( 4 + \frac{3 \epsilon}{4} \right) \left\Vert \partial_t u(t) \right\Vert_{L^2}^2 + \epsilon \left\Vert u(t) \right\Vert_{H_0^1}^2 - 2 ( 2 + \epsilon ) E\left( u(t), \partial_t u(t) \right) \\
& - \frac{C_\theconstnum}{2} \left\Vert \partial_t u(t) \right\Vert_{L^2}^{2 r_0} - C_\theconstnum \left\vert \frac{1}{2} \left\Vert u(t) \right\Vert_{H_0^1}^2 - E\left( u(t), \partial_t u(t) \right) \right\vert^{r_0}, \quad t \geq 0.
\end{align*}
By Theorem \ref{thm_main_damped}-\emph{(i)}, one has $\sup_{t \geq 0} \left\vert E\left( u(t), \partial_t u(t) \right) \right\vert < + \infty$. \addtocounter{constnum}{1} Hence, there exists 
\[C_\theconstnum = C_\theconstnum \left(\gamma, f, \sup_{t \geq 0} \left\vert E\left( u(t), \partial_t u(t) \right) \right\vert \right) > 0\]
such that
\[\epsilon X^2 - 2 ( 2 + \epsilon ) E\left( u(t), \partial_t u(t) \right) - C_{\constminusone{constnum}} \left\vert \frac{1}{2} X^2 - E\left( u(t), \partial_t u(t) \right) \right\vert^{r_0} \geq \frac{\epsilon}{2} X^2 - C_\theconstnum, \quad t \geq 0, \quad X \geq 0,\]
yielding
\[M^{\prime \prime}(t) \geq \left( 4 + \frac{3 \epsilon}{4} \right) \left\Vert \partial_t u(t) \right\Vert_{L^2}^2 - \frac{C_{\constminusone{constnum}}}{2} \left\Vert \partial_t u(t) \right\Vert_{L^2}^{2 r_0} + \frac{\epsilon}{2} \left\Vert u(t) \right\Vert_{H_0^1}^2 - C_\theconstnum, \quad t \geq 0.\]
Note that $C_\theconstnum$ can be chosen of the form 
\[C_\theconstnum = \tilde{C}_0 + \tilde{C}_1 \sup_{t \geq 0} \left\vert E\left( u(t), \partial_t u(t) \right) \right\vert,\]
with $\tilde{C}_0 = \tilde{C}_0\left( \gamma, f \right)$ and $\tilde{C}_1 = \tilde{C}_1\left( \gamma, f \right)$. \addtocounter{constnum}{1} Finally, there exists $C_\theconstnum = C_\theconstnum \left( \gamma, f \right) > 0$ such that
\[\left( 4 + \frac{3 \epsilon}{4} \right) X^2 - \frac{C_{\constminustwo{constnum}}}{2} X^{2 r_0} \geq \left( 4 + \frac{\epsilon}{2} \right) X^2 - C_\theconstnum, \quad X \geq 0,\]
and this gives 
\[M^{\prime \prime}(t) \geq \left( 4 + \frac{\epsilon}{2} \right) \left\Vert \partial_t u(t) \right\Vert_{L^2}^2 + \frac{\epsilon}{2} \left\Vert u(t) \right\Vert_{H_0^1}^2 - C_{\constminusone{constnum}} - C_\theconstnum, \quad t \geq 0.\]
\addtocounter{constnum}{1} Summarizing up, and using the Poincaré inequality (\ref{eq_poincare}), one obtains
\begin{equation}\label{eq_proof_L2_estimate_2}
M^{\prime \prime}(t) \geq \left( 4 + \frac{\epsilon}{2} \right) \left\Vert \partial_t u(t) \right\Vert_{L^2}^2 + C_\theconstnum \left\Vert u(t) \right\Vert_{L^2}^2 - C_{\constplusone{constnum}}, \quad t \geq 0,
\end{equation}
for some $C_\theconstnum = C_\theconstnum(f) > 0$ and $C_{\constplusone{constnum}} > 0$ of the form
\begin{equation}\label{eq_proof_L2_estimate_2_bis}
C_{\constplusone{constnum}} = \tilde{C}_2 + \tilde{C}_3 \sup_{t \geq 0} \left\vert E\left( u(t), \partial_t u(t) \right) \right\vert,
\end{equation}
with $\tilde{C}_2 = \tilde{C}_2(f, \gamma) > 0$ and $\tilde{C}_3 = \tilde{C}_3(f, \gamma) > 0$.

\paragraph{Step 2 : proof of (\ref{eq_thm_L2_estimate}).}
The proof is based on the following elementary lemma.

\begin{lem}\label{lem_basic_exponential_estimate}
Consider $M_0 \in \mathscr{C}^2(\mathbb{R}_+, \mathbb{R})$ such that there exists $C > 0$ such that $M_0^{\prime \prime}(t) \geq C M_0(t)$, for all $t \geq 0$. Then either $M_0(t) \rightarrow + \infty$ as $t \rightarrow + \infty$, or $M_0(t) \leq M_0(0) e^{- \sqrt{C} t }$, for all $t \geq 0$. 
\end{lem}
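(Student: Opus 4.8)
The plan is to reduce the second-order differential inequality to a first-order linear one via an integrating factor. Write $\omega = \sqrt{C}$ and introduce the auxiliary function $g(t) = M_0^{\prime}(t) + \omega M_0(t)$, which is of class $\mathscr{C}^1$ on $\mathbb{R}_+$ since $M_0 \in \mathscr{C}^2(\mathbb{R}_+, \mathbb{R})$. Using the hypothesis $M_0^{\prime \prime} \geq C M_0 = \omega^2 M_0$, one computes $g^{\prime}(t) = M_0^{\prime \prime}(t) + \omega M_0^{\prime}(t) \geq \omega^2 M_0(t) + \omega M_0^{\prime}(t) = \omega g(t)$. Hence $t \mapsto e^{-\omega t} g(t)$ has non-negative derivative, so it is non-decreasing on $\mathbb{R}_+$. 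This single monotonicity statement will drive the whole dichotomy.

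Next I would distinguish two cases according to the sign of $g$. If $g(t) \leq 0$ for all $t \geq 0$, then $(e^{\omega t} M_0(t))^{\prime} = e^{\omega t}(M_0^{\prime}(t) + \omega M_0(t)) = e^{\omega t} g(t) \leq 0$, so $e^{\omega t} M_0(t)$ is non-increasing; comparing its values at $0$ and at $t$ gives $M_0(t) \leq M_0(0) e^{-\omega t}$, which is the second alternative. Otherwise there exists $t_0 \geq 0$ with $g(t_0) > 0$; since $e^{-\omega t} g(t)$ is non-decreasing, $g(t) \geq a e^{\omega t}$ for all $t \geq t_0$, where $a = e^{-\omega t_0} g(t_0) > 0$. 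Then $(e^{\omega t} M_0(t))^{\prime} = e^{\omega t} g(t) \geq a e^{2\omega t}$, and integrating from $t_0$ to $t$ yields $e^{\omega t} M_0(t) \geq e^{\omega t_0} M_0(t_0) + \frac{a}{2\omega}(e^{2\omega t} - e^{2\omega t_0})$, hence $M_0(t) \geq e^{\omega(t_0 - t)} M_0(t_0) + \frac{a}{2\omega}(e^{\omega t} - e^{\omega(2 t_0 - t)}) \to + \infty$ as $t \to + \infty$. This is the first alternative, and since the two cases are plainly exhaustive, the proof is complete.

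The argument is elementary, and there is no genuine obstacle; the only point that requires a little care is that in the second case one obtains exponential growth of the auxiliary quantity $g$, not of $M_0$ directly, so one must apply the integrating factor $e^{\omega t}$ a second time and integrate in order to transfer this growth to $M_0$ itself. The regularity assumption $M_0 \in \mathscr{C}^2$ is exactly what is needed to differentiate $g$ and to apply the fundamental theorem of calculus to $e^{\omega t} M_0(t)$.
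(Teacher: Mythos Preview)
Your proof is correct and is essentially the same as the paper's. The paper sets $m(t) = M_0(t) e^{\sqrt{C} t}$ and shows $m'' \geq 2\sqrt{C}\, m'$, then splits on whether $m'$ is ever positive; since $m'(t) = e^{\sqrt{C} t} g(t)$ with your $g = M_0' + \sqrt{C}\, M_0$, the monotone quantity $e^{-2\sqrt{C} t} m'(t)$ is exactly your $e^{-\omega t} g(t)$, and the two case splits and integrations coincide line for line.
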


Set $M_0(t) = C_\theconstnum M(t) - C_{\constplusone{constnum}}$, $t \geq 0$. By (\ref{eq_proof_L2_estimate_2}), one has $M_0^{\prime \prime}(t) \geq C_\theconstnum M_0(t)$, for all $t \geq 0$. Assume that $M_0(t) \rightarrow + \infty$ as $t \rightarrow + \infty$. Then, there exists $T \geq 0$ such that $M_0(t) \geq 0$, for $t \geq T$, implying
\[M^{\prime \prime}(t) \geq \left( 4 + \frac{\epsilon}{2} \right) \left\Vert \partial_t u(t) \right\Vert_{L^2}^2, \quad t \geq T,\]
by (\ref{eq_proof_L2_estimate_2}). This gives 
\[\left( M^\prime(t) \right)^2 \leq \left( 1 + \frac{\epsilon}{8} \right)^{-1} M(t) M^{\prime \prime}(t), \quad t \geq T.\]
Applying Lemma \ref{lem_basic_explosion}, one finds that $M$ is bounded, a contradiction with $M_0(t) \rightarrow + \infty$. Hence, by Lemma \ref{lem_basic_exponential_estimate}, one obtains $M_0(t) \leq M_0(0) e^{- \sqrt{C_\theconstnum} t }$ for $t \geq 0$, that is,
\begin{equation}\label{eq_proof_L2_estimate_3}
M(t) \leq \frac{C_{\constplusone{constnum}}}{C_\theconstnum} + \left( M(0) - \frac{C_{\constplusone{constnum}}}{C_\theconstnum} \right) e^{- \sqrt{C_\theconstnum} t }, \quad t \geq 0.
\end{equation}
Note that using Theorem \ref{thm_main_damped}-\emph{(i)} and the fact that the energy is non-increasing, one has
\begin{equation}\label{eq_proof_L2_estimate_4}
\sup_{t \geq 0} \left\vert E\left( u(t), \partial_t u(t) \right) \right\vert \leq \max \left( \left\vert E\left( u^0, u^1 \right) \right\vert, C_{\constplustwo{constnum}} \right) \leq \left\vert E\left( u^0, u^1 \right) \right\vert + C_{\constplustwo{constnum}},
\end{equation}
for some $C_{\constplustwo{constnum}} = C_{\constplustwo{constnum}}(f, \gamma) > 0$. By (\ref{eq_proof_L2_estimate_2_bis}), this gives
\[\frac{C_{\constplusone{constnum}}}{C_\theconstnum} \leq c_0 + c_1 \left\vert E\left( u^0, u^1 \right) \right\vert,\]
for some $c_0 = c_0(f, \gamma) > 0$ and $c_1 = c_1(f, \gamma) > 0$. Using this in (\ref{eq_proof_L2_estimate_3}), one obtains (\ref{eq_thm_L2_estimate}).

\section[The H1-estimates]{The $H^1(\Omega)$-estimates}

Here, we prove Theorem \ref{thm_main_damped}-\emph{(iii)}. We first need an estimate on $M^\prime$. Note that it does not require the restrictive exponent $\frac{d}{d - 2}$.

\begin{lem}\label{lem_Mprime_estimate}
Assume that $d \geq 3$, and that $f$ satisfies \textnormal{(\ref{hyp_f_1})}, with $p < \frac{d + 2}{d - 2}$, \textnormal{(\ref{hyp_f_2})}, and \textnormal{(\ref{hyp_f_3})}. Then, there exist $c_0 = c_0(f, \gamma) > 0$, $c_1 = c_1(f, \gamma) > 0$, and $c_2 = c_2(f) > 0$ such that for all $\left( u^0, u^1 \right) \in H_0^1(\Omega) \times L^2(\Omega)$, if the solution $u$ of \textnormal{(\ref{KG_nL})} exists on $\mathbb{R}_+$, then
\begin{equation}\label{eq_lem_Mprime_estimate_1}
M^\prime(0) e^{-c_2 t} - \left( c_0 + c_1 \left\vert E\left( u^0, u^1 \right) \right\vert \right) \left( 1 - e^{-c_2 t} \right) \leq M^\prime(t) \leq c_0 + c_1 \left\vert E\left( u^0, u^1 \right) \right\vert, \quad t \geq 0.
\end{equation}
\end{lem}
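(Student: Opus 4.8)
The idea is to convert the second-order inequality (\ref{eq_proof_L2_estimate_2}) into a \emph{first-order} differential inequality for $M'$ alone, and then solve it. Denote by $\kappa = \kappa(f) > 0$ the coefficient of $\left\Vert u(t) \right\Vert_{L^2}^2$ in (\ref{eq_proof_L2_estimate_2}) and by $L$ the constant term there, which is of the form (\ref{eq_proof_L2_estimate_2_bis}), i.e.\ $L = \tilde C_2 + \tilde C_3 \sup_{t \geq 0}\left\vert E(u(t),\partial_t u(t)) \right\vert$ with $\tilde C_2 = \tilde C_2(f,\gamma) > 0$ and $\tilde C_3 = \tilde C_3(f,\gamma) > 0$. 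Since $M'(t) = 2\int_\Omega u(t)\,\partial_t u(t)\,\mathrm{d}x$, the Cauchy--Schwarz and Young inequalities give $\left\vert M'(t) \right\vert \leq 2\left\Vert u(t) \right\Vert_{L^2}\left\Vert \partial_t u(t) \right\Vert_{L^2} \leq \big(4 + \tfrac{\epsilon}{2}\big)\left\Vert \partial_t u(t) \right\Vert_{L^2}^2 + \tfrac{1}{4+\epsilon/2}\left\Vert u(t) \right\Vert_{L^2}^2$, so after multiplying by a small enough $c_2 = c_2(f) \in (0,1]$ (roughly $c_2 = \min\{1,\kappa(4+\epsilon/2)\}$) one has $c_2\left\vert M'(t) \right\vert \leq \big(4 + \tfrac{\epsilon}{2}\big)\left\Vert \partial_t u(t) \right\Vert_{L^2}^2 + \kappa\left\Vert u(t) \right\Vert_{L^2}^2$. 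Combined with (\ref{eq_proof_L2_estimate_2}) this yields
\[ M^{\prime\prime}(t) \geq c_2\,\left\vert M'(t) \right\vert - L, \qquad t \geq 0, \]
and in particular both $M^{\prime\prime}(t) \geq c_2 M'(t) - L$ and $M^{\prime\prime}(t) \geq -c_2 M'(t) - L$ hold for all $t \geq 0$.

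From the second inequality, $\tfrac{\mathrm{d}}{\mathrm{d}t}\big(e^{c_2 t} M'(t)\big) = e^{c_2 t}\big(M^{\prime\prime}(t) + c_2 M'(t)\big) \geq -L e^{c_2 t}$; integrating from $0$ to $t$ gives the lower bound $M'(t) \geq M'(0) e^{-c_2 t} - \tfrac{L}{c_2}\big(1 - e^{-c_2 t}\big)$. For the upper bound I would argue by contradiction, using that $M$ is bounded on $\mathbb{R}_+$ --- this is exactly Theorem \ref{thm_main_damped}-\emph{(ii)}, already established. Put $B = L/c_2$ and suppose $M'(t_0) > B$ for some $t_0 \geq 0$. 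Since $M^{\prime\prime}(t) \geq c_2\big(M'(t) - B\big)$ for every $t$, Gr\"onwall's lemma yields $M'(t) - B \geq \big(M'(t_0) - B\big) e^{c_2(t - t_0)}$ for $t \geq t_0$; integrating once more shows $M(t) \to +\infty$, contradicting the boundedness of $M$. Hence $M'(t) \leq B = L/c_2$ for all $t \geq 0$.

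It remains to absorb $L/c_2$ into the stated form. By (\ref{eq_proof_L2_estimate_4}), $\sup_{t \geq 0}\left\vert E(u(t),\partial_t u(t)) \right\vert \leq \left\vert E(u^0,u^1) \right\vert + C$ for some $C = C(f,\gamma) > 0$; plugging this into the expression for $L$ gives $L/c_2 \leq c_0 + c_1\left\vert E(u^0,u^1) \right\vert$ with $c_0 = c_0(f,\gamma) > 0$ and $c_1 = c_1(f,\gamma) > 0$. Inserting this bound into both the upper estimate $M'(t) \leq B$ and the lower estimate of the previous paragraph gives precisely (\ref{eq_lem_Mprime_estimate_1}), with $c_2 = c_2(f) > 0$. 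The only non-elementary ingredient is the boundedness of $M$ from Theorem \ref{thm_main_damped}-\emph{(ii)}, which is what makes the blow-up argument for the upper bound work; everything else is bookkeeping of constants. Note that (\ref{eq_proof_L2_estimate_2}), and hence this lemma, was derived only from (\ref{hyp_f_1}) with $p < \frac{d+2}{d-2}$, (\ref{hyp_f_2}) and (\ref{hyp_f_3}), so no use is made here of the stronger restriction $p \leq \frac{d}{d-2}$.
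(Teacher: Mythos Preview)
Your proof is correct and follows essentially the same approach as the paper: both start from the second-order inequality (\ref{eq_proof_L2_estimate_2}), use Young's inequality on $M'$ to absorb it into the right-hand side and obtain $M''(t) \geq c\,|M'(t)| - C$, then integrate the two resulting one-sided first-order inequalities; the upper bound is obtained by the same contradiction with the boundedness of $M$ from Theorem~\ref{thm_main_damped}-\emph{(ii)}. The only cosmetic difference is that the paper tunes the Young parameter $\eta$ so that the $\|u\|_{L^2}^2$ term cancels exactly, whereas you multiply by a small constant $c_2$ to dominate it---the resulting differential inequality and constants have the same form.
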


\begin{proof}
Using the proof of Theorem \ref{thm_main_damped}-\emph{(ii)} (see (\ref{eq_proof_L2_estimate_2}), (\ref{eq_proof_L2_estimate_2_bis}) and (\ref{eq_proof_L2_estimate_4})), one has 
\begin{equation}\label{eq_proof_lem_Mprime_1}
M^{\prime \prime}(t) \geq \left( 4 + \frac{\epsilon}{2} \right) \left\Vert \partial_t u(t) \right\Vert_{L^2}^2 + C_0 \left\Vert u(t) \right\Vert_{L^2}^2 - C_1, \quad t \geq 0,
\end{equation}
for some $C_0 = C_0(f) > 0$ and $C_1 > 0$ of the form
\begin{equation}\label{eq_proof_lem_Mprime_2}
\tilde{C}_0 + \tilde{C}_1 \left\vert E\left( u^0, u^1 \right) \right\vert,
\end{equation}
with $\tilde{C}_0 = \tilde{C}_0(f, \gamma) > 0$ and $\tilde{C}_1 = \tilde{C}_1(f, \gamma) > 0$. For $\eta > 0$, using (\ref{eq_proof_lem_Mprime_1}), one finds
\begin{align*}
\left\vert M^\prime(t) \right\vert & \leq \eta \left\Vert \partial_t u(t) \right\Vert_{L^2}^2 + \frac{1}{\eta} \left\Vert u(t) \right\Vert_{L^2}^2 \\
& \leq \eta \left( 4 + \frac{\epsilon}{2} \right)^{-1} M^{\prime \prime}(t) + \left( \frac{1}{\eta} - C_0 \eta \left( 4 + \frac{\epsilon}{2} \right)^{-1} \right) \left\Vert u(t) \right\Vert_{L^2}^2 + C_1 \eta \left( 4 + \frac{\epsilon}{2} \right)^{-1}, \quad t \geq 0.
\end{align*}
Choosing $\eta$ such that $\frac{1}{\eta} = C_0 \eta \left( 4 + \frac{\epsilon}{2} \right)^{-1}$, one obtains
\begin{equation}\label{eq_proof_lem_Mprime_3}
\left\vert M^\prime(t) \right\vert \leq C_2 M^{\prime \prime}(t) + C_3, \quad t \geq 0,
\end{equation}
with $C_2 = C_2(f) > 0$ and $C_3 > 0$ of the form (\ref{eq_proof_lem_Mprime_2}). Integrating (\ref{eq_proof_lem_Mprime_3}), one finds
\begin{equation}\label{eq_proof_lem_Mprime_4}
\left( M^\prime(t_0) - C_3 \right) e^{-\frac{t_0}{C_2}} \leq \left( M^\prime(t_1) - C_3 \right) e^{-\frac{t_1}{C_2}}, \quad 0 \leq t_0 \leq t_1,
\end{equation}
and
\begin{equation}\label{eq_proof_lem_Mprime_5}
M^\prime(0) + C_3 \leq \left( M^\prime(t_1) + C_3 \right) e^{\frac{t_1}{C_2}}, \quad t_1 \geq 0.
\end{equation}

If there exists $t_0 \geq 0$ such that $M^\prime(t_0) - C_3 > 0$, then (\ref{eq_proof_lem_Mprime_4}) implies that $M^\prime(t) \rightarrow + \infty$ as $t \rightarrow + \infty$, a contradiction with Theorem \ref{thm_main_damped}-\emph{(ii)}. Hence, one has $M^\prime(t) \leq C_3$, for all $t \geq 0$. This gives the upper bound of (\ref{eq_lem_Mprime_estimate_1}). The lower bound of (\ref{eq_lem_Mprime_estimate_1}) is given by (\ref{eq_proof_lem_Mprime_5}).
\end{proof}

Now, we prove Theorem \ref{thm_main_damped}-\emph{(iii)}. Consider $\left( u^0, u^1 \right) \in H_0^1(\Omega) \times L^2(\Omega)$ such that the solution $u$ of \textnormal{(\ref{KG_nL})} exists on $\mathbb{R}_+$. Set 
\[E_\mathscr{L}(t) = \frac{1}{2} \left\Vert u(t) \right\Vert_{H_0^1}^2 + \frac{1}{2} \left\Vert \partial_t u(t) \right\Vert_{L^2}^2 = E\left( u(t), \partial_t u(t) \right) + \int_\Omega F(u(t)) \mathrm{d}x, \quad t \geq 0.\]
By (\ref{eq_proof_lem_Mprime_1}) and (\ref{eq_proof_lem_Mprime_2}), one has
\begin{equation}\label{proof_thm_main_damped_iii_1}
E_\mathscr{L}(t) \leq C_0 M^{\prime \prime}(t) + C_1, \quad t \geq 0,
\end{equation}
for some $C_0 = C_0(f) > 0$, and $C_1 > 0$ of the form $\tilde{C}_0 + \tilde{C}_1 \left\vert E\left( u^0, u^1 \right) \right\vert$, with $\tilde{C}_0 = \tilde{C}_0(f, \gamma) > 0$ and $\tilde{C}_1 = \tilde{C}_1(f, \gamma) > 0$. 

\begin{rem}
In particular, without the restrictive assumption of Theorem \ref{thm_main_damped}-\emph{(iii)}, Lemma \ref{lem_Mprime_estimate} and (\ref{proof_thm_main_damped_iii_1}) imply
\[\int_0^T E_\mathscr{L}(t) \mathrm{d}t \lesssim 1 + T, \quad T \geq 0,\]
with a constant depending on $f$, $\gamma$, and $\left( u^0, u^1 \right)$. This is an average estimate of $E_\mathscr{L}$. It won't be used later on : to derive a non-average estimate, it seems necessary to estimate the derivative of $E_\mathscr{L}$, which is undefined without the additional assumption of Theorem \ref{thm_main_damped}-\emph{(iii)}.
\end{rem}

For the rest of the proof, we write $C$ and $C^\prime$ for some positive constants, that may change from line to line, and which depend on $\Omega$, $f$ and $\gamma$. We assume that $f$ satisfies \textnormal{(\ref{hyp_f_1})} with $p \leq \frac{d}{d - 2}$. Together with the Sobolev embedding $H^1(\Omega) \hookrightarrow L^{\frac{2d}{d - 2}}$, it gives $f(u(t)) \in L^2(\Omega)$, implying
\begin{equation}\label{proof_thm_main_damped_iii_2}
\int_\Omega \left\vert \partial_t u(t) f(u(t)) \right\vert \mathrm{d}x \leq C \left\Vert \partial_t u(t) \right\Vert_{L^2} \left( \left\Vert u(t) \right\Vert_{L^2} + \left\Vert u(t) \right\Vert_{H_0^1}^p \right), \quad t \geq 0.
\end{equation}
In particular, the derivative of $E_\mathscr{L}$ is well-defined, and using also (\ref{eq_energy_equality}), one has
\[E_\mathscr{L}^\prime(t) = - \int_\Omega \gamma \left\vert \partial_t u \right\vert^2 \mathrm{d}x + \int_\Omega \partial_t u(t) f(u(t)) \mathrm{d}x \leq \int_\Omega \partial_t u(t) f(u(t)) \mathrm{d}x, \quad t \geq 0.\]
Together with (\ref{eq_poincare}) and (\ref{proof_thm_main_damped_iii_2}), one finds
\[E_\mathscr{L}^\prime(t) \leq C \left\Vert \partial_t u(t) \right\Vert_{L^2} \left\Vert u(t) \right\Vert_{H_0^1} \left( 1 + \left\Vert u(t) \right\Vert_{H_0^1}^{p - 1} \right), \quad t \geq 0,\]
Using a second time the fact that $p \leq \frac{d}{d - 2}$, and $d \geq 3$, one has $p - 1 \leq 2$, implying
\[E_\mathscr{L}^\prime(t) \leq C E_\mathscr{L}(t) \left( 1 + E_\mathscr{L}(t) \right), \quad t \geq 0.\]
Integrating, this gives
\begin{equation}\label{proof_thm_main_damped_iii_3}
E_\mathscr{L}(t_1) \leq E_\mathscr{L}(t_0) \exp \left( C \int_{t_0}^{t_1} \left( 1 + E_\mathscr{L}(s) \right) \mathrm{d}s \right), \quad t_1 \geq t_0 \geq 0.
\end{equation}

\paragraph{Estimate for $t \leq 1$.} Choosing $t_0 = 0$ in (\ref{proof_thm_main_damped_iii_3}) and using (\ref{proof_thm_main_damped_iii_1}), one finds
\[E_\mathscr{L}(t_1) \leq E_\mathscr{L}(0) \exp \left( \int_0^1 \left( C + C^\prime M^{\prime \prime}(s) \right) \mathrm{d}s \right), \quad 1 \geq t_1 \geq 0.\]
Note that $\left\vert E\left( u^0, u^1 \right) \right\vert \lesssim E_\mathscr{L}(0)$, with a constant depending on $f$, and that $\left\vert M^\prime(0) \right\vert \lesssim E_\mathscr{L}(0)$. Hence, using Lemma \ref{lem_Mprime_estimate}, one obtains
\[E_\mathscr{L}(t_1) \leq E_\mathscr{L}(0) \exp \left( C + C^\prime E_\mathscr{L}(0) \right), \quad 1 \geq t_1 \geq 0,\]
We choose the function $\alpha$ of Theorem \ref{thm_main_damped}-\emph{(iii)} so that $\alpha(s) \geq 2 s \exp \left( C + 2 C^\prime s \right)$, for all $s \in [0, 1]$.

\paragraph{Estimate for $t \geq 1$.} Integrating (\ref{proof_thm_main_damped_iii_3}), and using (\ref{proof_thm_main_damped_iii_1}), one finds
\begin{equation}\label{proof_thm_main_damped_iii_4}
E_\mathscr{L}(t_1) \leq \left( \int_{t_1 - 1}^{t_1} \left( C + C^\prime M^{\prime \prime}(t) \right) \mathrm{d}t \right) \exp \left( \int_{t_1 - 1}^{t_1} \left( C + C^\prime M^{\prime \prime}(s) \right) \mathrm{d}s \right), \quad t_1 \geq 1.
\end{equation}
Lemma \ref{lem_Mprime_estimate} gives
\begin{equation}\label{proof_thm_main_damped_iii_5}
\int_{t_1 - 1}^{t_1} M^{\prime \prime}(t) \mathrm{d}t \leq 2 \left( c_0 + c_1 \left\vert E\left( u^0, u^1 \right) \right\vert \right) + \left\vert M^\prime(0) \right\vert e^{c_2 ( 1 - t_1 )}, \quad t_1 \geq 1,
\end{equation}
where $c_0$, $c_1$ and $c_2$ are the constants of Lemma \ref{lem_Mprime_estimate}. 

Using (\ref{proof_thm_main_damped_iii_4}) and (\ref{proof_thm_main_damped_iii_5}), we can complete the proof of Theorem \ref{thm_main_damped}-\emph{(iii)}. We first prove (\ref{eq_thm_H1_estimate_1}). By (\ref{proof_thm_main_damped_iii_5}), one has
\[\int_{t_1 - 1}^{t_1} M^{\prime \prime}(t) \mathrm{d}t \leq C + C^\prime E_\mathscr{L}(0), \quad t_1 \geq 1.\]
Hence, (\ref{proof_thm_main_damped_iii_4}) gives 
\[E_\mathscr{L}(t_1) \leq \left( C + C^\prime E_\mathscr{L}(0) \right) \exp \left( C + C^\prime E_\mathscr{L}(0) \right), \quad t_1 \geq 1,\]
We choose the function $\alpha$ of Theorem \ref{thm_main_damped}-\emph{(iii)} so that $\alpha(s) \geq \left( C + 2 C^\prime s \right) \exp \left( C + 2 C^\prime s \right)$, for all $s \geq 1$. This completes the proof of (\ref{eq_thm_H1_estimate_1}).

Finally, we prove (\ref{eq_thm_H1_estimate_2}). There exists $T > 0$, which depends on $f$, $\gamma$, and $E_\mathscr{L}(0)$, such that 
\[\left\vert M^\prime(0) \right\vert e^{c_2 ( 1 - t_1 )} \leq c_0, \quad t_1 \geq T.\]
Hence, (\ref{proof_thm_main_damped_iii_4}) and (\ref{proof_thm_main_damped_iii_5}) imply
\[E_\mathscr{L}(t_1) \leq \left( C + C^\prime \left\vert E\left( u^0, u^1 \right) \right\vert \right) \exp \left( C + C^\prime \left\vert E\left( u^0, u^1 \right) \right\vert \right), \quad t_1 \geq T.\]
Up to increasing the constant $c$ defining the function $\alpha$, this completes the proof of (\ref{eq_thm_H1_estimate_2}).

\appendix

\section{Proof of the elementary lemmas}

\subsection{Proof of Lemma \ref{lem_basic_explosion}}

Assume by contradiction that there exists $T_0 \geq T$ such that $M^\prime(T_0) > 0$. In particular, one has $M(T_0) > 0$. We claim that $M(t) > 0$ for all $t \geq T_0$. Indeed, assume by contradiction that there exists $T_1 > T_0$ such that $M(T_1) = 0$ and $M(t) > 0$ for $t \in [T_0, T_1)$. Then, one has $M^{\prime \prime}(t) \geq 0$ for $t \in [T_0, T_1)$, implying that $M^\prime(t) \geq M^\prime(T_0) > 0$ for $t \in [T_0, T_1)$. In particular, this gives $0 = M(T_1) \geq M(T_0) > 0$, a contradiction. 

As $M(t) > 0$ for all $t \geq T_0$, one has $M^{\prime \prime}(t) > 0$ for $t \geq T_0$, implying $M^\prime(t) \geq M^\prime(T_0)$, for $t \geq T_0$. In particular, $M(t) \rightarrow + \infty$ as $t \rightarrow + \infty$. Integrating two times the inequality
\[\frac{M^\prime(t)}{M(t)} \leq \delta \frac{M^{\prime \prime}(t)}{M^\prime(t)}, \quad t \geq T_0,\]
one obtains
\[\left( t - T_0 \right) \frac{M^\prime(T_0)}{M(T_0)^{\frac{1}{\delta}}} \leq \left( \frac{1}{\delta} - 1 \right)^{-1} \left( \frac{1}{M(T_0)^{\frac{1}{\delta} - 1}} - \frac{1}{M(t)^{\frac{1}{\delta} - 1}} \right), \quad t \geq T_0.\]
For $t$ sufficiently large, this gives a contradiction.

\subsection{Proof of Lemma \ref{lem_basic_exponential_estimate}}

Set $m(t) = M_0(t) e^{\sqrt{C} t}$, $t \geq 0$. One has $m^{\prime \prime} \geq 2 \sqrt{C} m^\prime$, implying
\[m(t_1) \geq m(t_0) + \frac{m^\prime(t_0)}{2 \sqrt{C}} \left( e^{2 \sqrt{C} ( t_1 - t_0 )} - 1 \right), \quad t_1 \geq t_0 \geq 0.\]
This gives
\[M_0(t_1) \geq m(t_0) e^{- \sqrt{C} t_1} + \frac{m^\prime(t_0)}{2 \sqrt{C}} \left( e^{\sqrt{C} ( t_1 - 2 t_0 )} - e^{- \sqrt{C} t_1} \right), \quad t_1 \geq t_0 \geq 0.\]
If there exists $t_0 \geq 0$ such that $m^\prime(t_0) > 0$, then $M_0(t) \rightarrow + \infty$. Conversely, if $m^\prime(t) \leq 0$ for all $t \geq 0$, then one has $m(t) \leq m(0)$ for $t \geq 0$, that is, $M_0(t) \leq M_0(0)e^{- \sqrt{C} t }$, for $t \geq 0$.

\printbibliography

@Article{BlairSmithSogge,
 Author = {Blair, Matthew D. and Smith, Hart F. and Sogge, Christopher D.},
 Title = {Strichartz estimates for the wave equation on manifolds with boundary},
 FJournal = {Annales de l'Institut Henri Poincar{\'e}. Analyse Non Lin{\'e}aire},
 Journal = {Ann. Inst. Henri Poincar{\'e}, Anal. Non Lin{\'e}aire},
 ISSN = {0294-1449},
 Volume = {26},
 Number = {5},
 Pages = {1817--1829},
 Year = {2009},
 Language = {English},
 DOI = {10.1016/j.anihpc.2008.12.004},
 Keywords = {58J45},
 zbMATH = {5612928},
 Zbl = {1198.58012}
}

@article{KeelTao,
 author = {Markus Keel and Terence Tao},
 journal = {American Journal of Mathematics},
 number = {5},
 pages = {955--980},
 publisher = {Johns Hopkins University Press},
 title = {Endpoint Strichartz Estimates},
 volume = {120},
 year = {1998}
}

@article{Payne-Sattinger,
  title={Saddle points and instability of nonlinear hyperbolic equations},
  author={L. E. Payne and D. H. Sattinger},
  journal={Israel Journal of Mathematics},
  year={1975},
  volume={22},
  pages={273-303},
  DOI={10.1007/BF02761595}
}

@Article{GinibreVelo,
 Author = {Ginibre, J. and Velo, G.},
 Title = {The global {Cauchy} problem for the nonlinear {Klein}-{Gordon} equation. {II}},
 FJournal = {Annales de l'Institut Henri Poincar{\'e}. Analyse Non Lin{\'e}aire},
 Journal = {Ann. Inst. Henri Poincar{\'e}, Anal. Non Lin{\'e}aire},
 ISSN = {0294-1449},
 Volume = {6},
 Number = {1},
 Pages = {15--35},
 Year = {1989},
 Language = {English},
 DOI = {10.1016/S0294-1449(16)30329-8},
}

@article{Cazenave1985,
  title={Uniform estimates for solutions of nonlinear Klein-Gordon equations},
  author={Thierry Cazenave},
  journal={Journal of Functional Analysis},
  year={1985},
  volume={60},
  pages={36-55},
  url={https://api.semanticscholar.org/CorpusID:121770305}
}

@misc{Burq_Raugel_Schlag_weak,
  url = {https://arxiv.org/abs/1801.06735},
  author = {Burq, Nicolas and Raugel, Genevieve and Schlag, Wilhelm},
  title = {Long time dynamics for weakly damped nonlinear Klein-Gordon equations},
  publisher = {arXiv},
  year = {2018},
}

@article{BurqRaugelSchlag,
  doi = {10.24033/asens.2349},
  year = {2017},
  publisher = {Societe Mathematique de France},
  volume = {50},
  number = {6},
  pages = {1447--1498},
  author = {Nicolas Burq and  Genevi{\`{e}}ve Raugel and  Wilhelm Schlag},
  title = {Long time dynamics for damped Klein-Gordon equations},
  journal = {Annales scientifiques de l{\textquotesingle}{\'{E}}cole normale sup{\'{e}}rieure}
}

@article{feireisl,
 issn = {0033569X, 15524485},
 url = {http://www.jstor.org/stable/43638210},
 author = {Eduard Feireisl},
 journal = {Quarterly of Applied Mathematics},
 number = {1},
 pages = {55--70},
 publisher = {Brown University},
 title = {Finite Energy Travelling Waves for Nonlinear Damped Wave Equations},
 volume = {56},
 year = {1998}
}

@article{Ivanovici,
author = {Ivanovici, Oana},
journal = {Journal of the European Mathematical Society},
number = {5},
pages = {1357-1388},
publisher = {European Mathematical Society Publishing House},
title = {Counterexamples to the Strichartz inequalities for the wave equation in general domains with boundary},
volume = {014},
year = {2012},
}

\noindent
\textsc{Perrin Thomas:} \texttt{perrin@math.univ-paris13.fr}

\noindent
\textit{Laboratoire Analyse Géométrie et Application, Institut Galilée - UMR 7539, CNRS/Université Sorbonne Paris Nord, 99 avenue J.B. Clément, 93430 Villetaneuse, France}

\end{document}